\newtheorem{Theorem}{Theorem}[section]
\newtheorem{Proposition}[Theorem]{Proposition} 
\newtheorem*{Main Theorem*}{Main Theorem} 
\newtheorem{Lemma}[Theorem]{Lemma}
\newtheorem{Corollary}[Theorem]{Corollary}
\newtheorem{Definition-Lemma}[Theorem]{Definition-Lemma}
\newtheorem{Main Conjecture}[Theorem]{Main Conjecture}
\theoremstyle{remark}
\newtheorem{Example}[Theorem]{Example}
\newcommand\evac{{\tt evac}}
\newcommand\infusion{{\tt infusion}}
\theoremstyle{plain}
\newtheorem*{MainThm}{Main Theorem}
\newcommand{\cellsize}{12}
\newlength{\cellsz} \setlength{\cellsz}{\cellsize\unitlength}
\newsavebox{\cell}
\sbox{\cell}{\begin{picture}(\cellsize,\cellsize)
\put(0,0){\line(1,0){\cellsize}}
\put(0,0){\line(0,1){\cellsize}}
\put(\cellsize,0){\line(0,1){\cellsize}}
\put(0,\cellsize){\line(1,0){\cellsize}}
\end{picture}}
\newcommand\cellify[1]{\def\thearg{#1}\def\nothing{}%
\ifx\thearg\nothing
\vrule width0pt height\cellsz depth0pt\else
\hbox to 0pt{\usebox{\cell} \hss}\fi%
\vbox to \cellsz{
\vss
\hbox to \cellsz{\hss$#1$\hss}
\vss}}
\newcommand\tableau[1]{\vtop{\let\\\cr
\baselineskip -16000pt \lineskiplimit 16000pt \lineskip 0pt
\ialign{&\cellify{##}\cr#1\crcr}}}
\begin{document}
\pagestyle{plain}

\mbox{}\vspace{-2.0ex}
\title{An $S_3$-symmetric Littlewood-Richardson rule}
\author{Hugh Thomas}
\address{Department of Mathematics and Statistics, University of New Brunswick, Fredericton, New Brunswick, E3B 5A3, Canada }
\email{hugh@math.unb.ca}

\author{Alexander Yong}
\address{Department of Mathematics, University of Minnesota, Minneapolis, MN 55455, USA}

\email{ayong@math.umn.edu}

\date{April 4, 2007}

\maketitle
\section{Introduction}
Fix Young shapes $\lambda,\mu,\nu\subseteq \Lambda:=\ell\times k$.
Viewing the {\bf Littlewood-Richardson coefficients} 
$C_{\lambda,\mu,\nu}$
as intersection numbers of three
Schubert varieties in general position in the Grassmannian of 
$\ell$-dimensional planes
in ${\mathbb C}^{\ell+k}$, one has the obvious $S_3$-symmetries: 
\begin{equation}
\label{eqn:basicsym}
C_{\lambda,\mu,\nu}=C_{\mu,\nu,\lambda}=C_{\nu,\lambda,\mu}
=C_{\mu,\lambda,\nu}=C_{\nu,\mu,\lambda}=C_{\lambda,\nu,\mu}.
\end{equation}
There is interest in the combinatorics of these symmetries;
previously studied Littlewood-Richardson rules for $C_{\lambda,\mu,\nu}$
manifest at most three of the six, see, e.g.,
\cite{Bere.Zele, Knutson.Tao.Woodward, Vall.Pak, Kamnitzer} and the references
therein. We construct a {\bf carton rule} for $C_{\lambda,\mu,\nu}$ 
that transparently and uniformly explains all symmetries (\ref{eqn:basicsym}).

Figure~\ref{fig:growth} depicts a 
{\bf carton}, i.e., a three-dimensional box with 
a grid drawn rectilinearly on the six faces of its surface. Along the 
``$\emptyset,T_\lambda, T_\mu$'' face, the 
grid has $(|\mu|+1)\times (|\lambda|+1)$
vertices (including ones on the bounding edges of the face); the remainder
of the grid is similarly determined.
Define a {\bf carton filling} to be an assignment of a Young diagram 
to each vertex of the grid so the shapes increase
one box at a time while moving away from $\emptyset$, and so that, 
for any subgrid
$\begin{matrix}
\alpha & - & \beta\\
| & & | \\
\gamma & - &\delta
\end{matrix}$ the {\bf Fomin growth assumptions} hold:
\begin{itemize}
\item[(F1)] if $\alpha$ is the unique shape containing $\gamma$ and
contained in $\beta$, then $\delta=\alpha$;
\item[(F2)] otherwise there is a unique such shape other than $\alpha$, 
and this shape is $\delta$. 
\end{itemize}
Notice that these conditions are symmetric in $\alpha$ and $\delta$.

Initially, assign the shapes 
$\emptyset$ and $\Lambda$ to opposite corners, as in Figure 1. A 
standard Young tableau $T\in SYT(\sigma/\pi)$ of shape 
${\sigma/\pi}$ is equivalent to a 
{\bf shape chain} in Young's lattice, e.g.,
\[T=\tableau{{\ }&{\ }&{ \ }&{1}&{5}\\{\ }&{2}&{3}\\{4}}
\leftrightarrow 
\tableau{{\ }&{\ }&{\ }\\{\ }}-
\tableau{{\ }&{\ }&{\ }&{\ }\\{\ }}-
\tableau{{\ }&{\ }&{\ }&{\ }\\{\ }&{\ }}-
\tableau{{\ }&{\ }&{\ }&{\ }\\{\ }&{\ }&{\ }}-
\tableau{{\ }&{\ }&{\ }&{\ }\\{\ }&{\ }&{\ }\\{\ }}-
\tableau{{\ }&{\ }&{\ }&{\ }&{\ }\\{\ }&{\ }&{\ }\\{\ }}\]
by starting with the unfilled inner shape, and adding one box at a time to
indicate the box containing $1$, then $2$, etc. 
Fix a choice of standard tableaux $T_{\lambda},T_{\mu}$ and
$T_{\nu}$ of respective shapes $\lambda,\mu$ and $\nu$. 
Initialize the indicated edges 
with the shape chains for these tableaux.

Let ${\tt CARTONS}_{\lambda,\mu,\nu}$ be
all carton fillings with the above 
initial data.

\begin{MainThm}
\label{thm:main}
The Littlewood-Richardson coefficient $C_{\lambda,\mu,\nu}$
equals $\#{\tt CARTONS}_{\lambda,\mu,\nu}$. 
\end{MainThm}

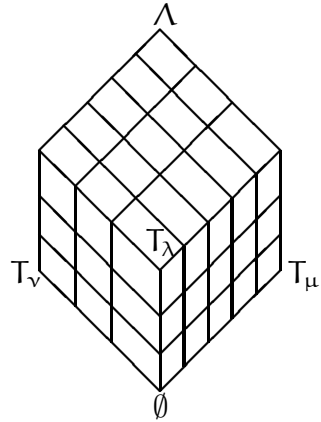
\begin{figure}[h]
\unitlength=.32mm
\begin{picture}(120,160)
\thicklines
\put(60,10){\line(1,1){50}}
\put(60,25){\line(1,1){50}}
\put(60,41){\line(1,1){50}}
\put(60,10){\line(-1,1){50}}
\put(60,10){\line(0,1){50}}
\put(70,20){\line(0,1){50}}
\put(80,30){\line(0,1){50}}
\put(90,40){\line(0,1){50}}
\put(100,50){\line(0,1){50}}
\put(10,60){\line(0,1){50}}
\put(25,45){\line(0,1){50}}
\put(40,30){\line(0,1){50}}

\put(110,60){\line(0,1){50}}
\put(10,110){\line(1,1){50}}
\put(25,95){\line(1,1){50}}
\put(40,80){\line(1,1){50}}

\put(110,110){\line(-1,1){50}}
\put(100,100){\line(-1,1){50}}
\put(90,90){\line(-1,1){50}}
\put(80,80){\line(-1,1){50}}
\put(70,70){\line(-1,1){50}}

\put(60,60){\line(1,1){50}}
\put(60,60){\line(-1,1){50}}
\put(60,41){\line(-1,1){50}}
\put(60,25){\line(-1,1){50}}

\put(57,-1){$\emptyset$}
\put(113,55){$T_\mu$}
\put(54,68){$T_\lambda$}
\put(-2,55){$T_\nu$}
\put(57,162){$\Lambda$}
\end{picture}
\caption{\label{fig:growth} The carton rule 
counts $C_{\lambda,\mu,\nu}$ by
assigning Young diagrams to the vertices of the six faces}
\end{figure}

This rule manifests bijections between 
${\tt CARTONS}_{\lambda,\mu,\nu}$
and ${\tt CARTONS}_{\delta,\epsilon,\zeta}$ for permutations
$(\delta,\epsilon,\zeta)$ of $(\lambda,\mu,\nu)$. We remark
that it can be stated as counting lattice points
of a $0,1$-polytope, and readily extends to the setting of 
\cite{Thomas.Yong, Thomas.Yong:DE}.

 The Theorem is proved in Section~2, starting from the
jeu de taquin formulation of the Littlewood-Richardson rule, and
using Fomin's growth diagram ideas. In Figure~\ref{fig:easyex} we give an example of the Main Theorem. An extended example
is given in Section~3.

\begin{figure}[h]
\unitlength=.33mm
\begin{picture}(200,360)
\thicklines
\put(80,30){\line(1,1){150}}
\put(80,30){\line(-1,1){50}}
\put(80,30){\line(0,1){100}}
\put(80,130){\line(1,1){150}}
\put(230,180){\line(0,1){100}}
\put(30,80){\line(0,1){100}}
\put(30,180){\line(1,1){150}}
\put(30,180){\line(1,-1){50}}
\put(180,330){\line(1,-1){50}}
\put(80,80){\line(1,1){150}}
\put(80,80){\line(-1,1){50}}
\put(130,80){\line(0,1){100}}
\put(130,180){\line(-1,1){50}}
\put(180,130){\line(0,1){100}}
\put(180,230){\line(-1,1){50}}
\put(77,10){$\emptyset$}
\put(130,60){$\tableau{{\ }}$}
\put(180,110){$\tableau{{\ }&{\ }}$}
\put(235,160){$\tableau{{1 } &{2 }\\{3 }}=\mu$}
\put(235,230){$\tableau{{\ }&{ \ }\\{ \ }&{\ }}$}
\put(235,280){$\tableau{{\ }&{\ }&{\ }\\{ \ }&{ \ }}$}
\put(85,70){$\tableau{{\ }}$}
\put(135,120){$\tableau{{\ }\\{\ }}$}
\put(185,170){$\tableau{{\ }&{\ }\\{\ }}$}
\put(85,120){$\tableau{{1}&{2}}$}
\put(85,105){$=\lambda$}
\put(135,170){$\tableau{{\ }&{ \ }\\{\ }}$}
\put(185,215){$\tableau{{\ }&{ \ }&{ \ }\\{ \ }}$}
\put(-12,70){$\nu=\tableau{{1 }}$}
\put(10,120){$\tableau{{ \ }\\{\ }}$}
\put(0,170){$\tableau{{\ }&{\ }\\{\ }}$}
\put(55,245){$\tableau{{\ }&{\ }\\{ \ }&{\ }}$}
\put(105,295){$\tableau{{\ }&{\ }&{\ }\\{\ }&{\ }}$}
\put(155,345){$\tableau{{\ }&{\ }&{ \ }\\{\ }&{ \ }&{ \ }}=\Lambda$}
\end{picture}
\caption{\label{fig:easyex} The ``front'' three faces of a
carton filling for $C_{(2),(2,1),(1)}=1$. The choices of
tableaux $T_{\lambda}, T_{\mu}$ and
$T_{\nu}$ are as shown.}
\end{figure}
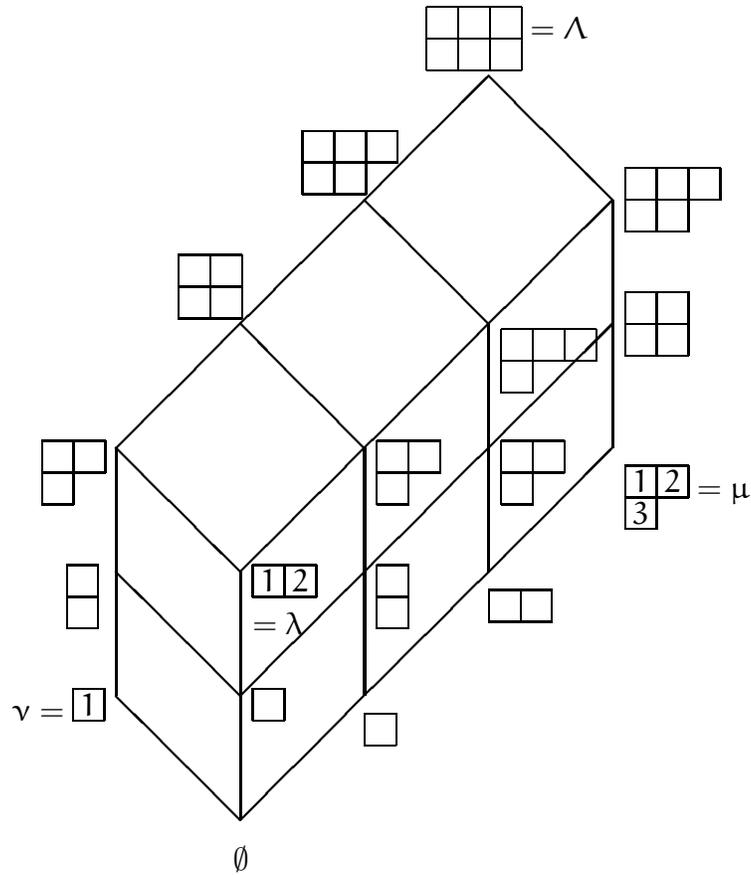

\section{Tableau facts and proof of the Main Theorem}

\subsection{Tableau sliding}
There is a partial order $\prec$ 
on the rectangle $\Lambda$ where
$x\prec y$ if $x$ is weakly northwest of $y$.  
Given $T\in {\rm SYT}(\nu/\lambda)$ 
consider $x\in \lambda$, maximal in $\prec$ subject to being
less than \emph{some} box of $\nu/\lambda$. 
Associate another standard tableau ${\tt jdt}_{x}(T)$, called the {\bf jeu de
taquin slide} of $T$ into $x$: Let $y$ be the box of $\nu/\lambda$
with the smallest label, among those covering $x$. 
Move the label of 
$y$ to $x$, leaving
$y$ vacant. Look for boxes of $\nu/\lambda$ covering $y$ and repeat,
moving into $y$ the smallest label among those boxes covering it. 
Then ${\tt jdt}_{x}(T)$ results when no further 
slides are possible. The {\bf rectification} of $T$
is the iteration of jeu de taquin slides until 
terminating at a straight shape standard tableau ${\tt rectification}(T)$.

We can impose a specific
``inner'' $U\in {\rm SYT}(\lambda)$ that encodes the order
in which the jeu de taquin slides are done. For example, if $U=\tableau{{1}&{2}&{3}\\{4}}$ then
\[T=\tableau{{\ }&{\ }&{ \ }&{1}&{5}\\{\ }&{2}&{3}\\{4}&{6}}
\to \tableau{{\ }&{\ }&{ \ }&{1}&{5}\\{2 }&{3}&{\ }\\{4}&{6}}
\to \tableau{{\ }&{\ }&{ 1 }&{5}&{\ }\\{2 }&{3}&{\ }&\\{4}&{6}}
\to \tableau{{\ }&{1 }&{ 5 }&{\ }&{\ }\\{2 }&{3}&{\ }\\{4}&{6}}
\to \tableau{{1 }&{3 }&{ 5 }&{\ }&{\ }\\{2 }&{6}&{\ }\\{4}&{\ }}
\]
$={\tt rectification}(T)$. 
Placing these chains
one atop another (starting with $T$'s and ending with
${\tt rectification}(T)$'s) gives a Fomin {\bf growth diagram}, which
in this case is given in Table~\ref{table:growth}. 
Growth diagrams satisfy (F1) and (F2); see Fomin's~\cite[Appendix~2]{Stanley}
for more.

\begin{table}[h]
\begin{center}
\begin{tabular}{|l|l|l|l|l|l|l|}
\hline
$(3,1)$ & $(4,1)$ & $(4,2)$ & $(4,3)$ & $(4,3,1)$ & $(5,3,1)$ & $(5,3,2)$\\ \hline
$(3)$ & $(4)$ & $(4,1)$ & $(4,2)$ & $(4,2,1)$ & $(5,2,1)$ & $(5,2,2)$\\ \hline
$(2)$ & $(3)$ & $(3,1)$ & $(3,2)$ & $(3,2,1)$ & $(4,2,1)$ & $(4,2,2)$\\ \hline
$(1)$ & $(2)$ & $(2,1)$ & $(2,2)$ & $(2,2,1)$ & $(3,2,1)$ & $(3,2,2)$\\ \hline
$\emptyset$ & $(1)$ & $(1,1)$ & $(2,1)$ & $(2,1,1)$ & $(3,1,1)$ & $(3,2,1)$\\ \hline
\end{tabular}
\end{center}
\caption{\label{table:growth} A growth diagram}
\end{table}

Given a top row $U$ and left column
$T$, define ${\tt infusion}_1(U,T)$ to be the bottom row of the growth
diagram, and ${\tt infusion}_2(U,T)$ to be the right column.  
We write ${\tt infusion}(U,T)$ for the ordered pair 
$({\tt infusion}_1(U,T),{\tt infusion}_2(U,T))$. 
Growth diagrams are transpose symmetric, because (F1) and (F2) are. So 
one has the {\bf infusion involution}: 
\[{\tt infusion}({\tt infusion}(U,T))=(U,T).\]
If $U$ is a straight shape, ${\tt infusion}_1(U,T)=
{\tt rectification}(T)$, while ${\tt infusion}_2(U,T)$ encodes the order
in which squares were vacated in the jeu de taquin process.  

Also, given 
$T\in {\rm SYT}(\nu/\lambda)$, consider $x\in\Lambda\setminus
\nu$ \emph{minimal} subject to being larger than
some element of $\nu/\lambda$. The {\bf reverse jeu de taquin
slide} ${\tt revjdt}_{x}(T)$ of $T$ into $x$ is defined similarly
to a jeu de taquin slide, except we move into $x$ the \emph{largest} of
the labels among boxes in $\nu/\lambda$ covered by~$x$. 
We define {\bf reverse rectification} ${\tt revrectification}(T)$ similarly.
The first fundamental theorem of jeu de taquin asserts 
{\tt (rev)rectification} is well-defined.

Fix $T_{\mu}\in {\rm SYT}(\mu)$. The
number of $T\in {\rm SYT}(\nu/\lambda)$ such that ${\tt rectification}(T)=T_{\mu}$
equals $C_{\lambda,\mu,\nu^{\vee}}=C_{\lambda,\mu}^{\nu}$ where $\nu^{\vee}$ is the
straight shape obtained as the $180$-degree rotation of $\Lambda\setminus \nu$. This
is Sch\"{u}tzenberger's jeu de taquin formulation of the Littlewood-Richardson rule, 
see~\cite[Appendix~2]{Stanley}.

By a {\bf slide} we mean either kind of jeu de taquin slide. 
Consider two equivalence relations on a 
pair of tableaux $T$ and $U$. Tableaux are 
{\bf jeu de taquin equivalent} if one
can be obtained from the other by a sequence of slides.
They are {\bf dual equivalent} if any such sequence results in tableaux
of the same shape \cite{Haiman:DE}. Facts: Tableaux of the same
straight shape are dual equivalent. A common application of
slides to dual equivalent tableaux
produces dual equivalent tableaux.
A pair of tableaux 
that are both jeu de taquin and dual equivalent must be equal.

Recall Sch\"{u}tzenberger's evacuation map. 
For $T\in {\rm SYT}(\lambda)$, let 
${\hat T}$ be obtained by erasing the entry $1$ (in the northwest 
corner $c$) of $T$
and subtracting $1$ from the remaining entries. Let
$\Delta(T)={\tt jdt}_{c}({\hat T})$.
The {\bf evacuation} ${\tt evac}(T)\in {\rm SYT}(\lambda)$ 
is defined by the shape chain
\[\emptyset={\tt shape}(\Delta^{|\lambda|}(T))-
{\tt shape}(\Delta^{|\lambda|-1}(T))-\ldots - {\tt shape}(\Delta^{1}(T)) - T.\]
This map is an involution: 
${\tt evac}({\tt evac}(T))=T$.

\subsection{Proof of the rule} 
Given $T\!\in\! {\rm SYT}(\alpha)$ for a straight shape $\alpha$, define
${\widetilde T}\!\in\! {\rm SYT}({\tt rotate}(\alpha))$ where ${\tt rotate}(\alpha)=\Lambda\setminus\alpha^{\vee}$ by computing 
${\tt evac}(T)\in {\rm SYT}(\alpha)$, replacing entry $i$ with $|\alpha|-i+1$
throughout and rotating the resulting tableau $180$-degrees and placing it 
at the bottom right corner of $\Lambda$.

We need the following well-known fact.  The proof we give extends
straightforwardly to the setup of \cite{Thomas.Yong,Thomas.Yong:DE}, allowing a ``cominuscule'' version of the Main Theorem.
\begin{Lemma}
Suppose $\alpha,\beta$ and $\gamma$ are shapes where
\[T_{\beta}\in {\rm SYT}(\beta), T_{\gamma^{\vee}/\alpha}\in {\rm SYT}(\gamma^{\vee}/\alpha)
\mbox{ and } {\tt rectification}(T_{\gamma^{\vee}/\alpha})=T_{\beta}.\]
Then ${\tt revrectification}(T_{\gamma^{\vee}/\alpha})={\tt revrectification}(T_{\beta})={\widetilde T}_{\beta}$.
\end{Lemma}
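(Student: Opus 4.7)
The plan is to prove the two equalities separately. The first, ${\tt revrectification}(T_{\gamma^{\vee}/\alpha}) = {\tt revrectification}(T_\beta)$, will follow from the fact that reverse rectification, like forward rectification, is a class function on jeu de taquin equivalence classes. This is a consequence of the symmetry of (F1) and (F2) in $\alpha$ and $\delta$ (as the paper notes): successive (reverse) slides can be arranged into a Fomin growth diagram whose infusion involution guarantees that the output is independent of the representative chosen within the jdt class. Since the hypothesis ${\tt rectification}(T_{\gamma^{\vee}/\alpha}) = T_\beta$ places the skew tableau and $T_\beta$ in a common class, their reverse rectifications agree.

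For the second equality, set $R := {\tt revrectification}(T_\beta)$ and $W := \widetilde{T}_\beta$. Both are skew tableaux of shape ${\tt rotate}(\beta) = \Lambda \setminus \beta^{\vee}$, of size $|\beta|$. The strategy is to verify that both $R$ and $W$ forward-rectify to $T_\beta$, and then invoke the Poincar\'{e} duality fact $C_{\beta,\beta^{\vee}}^{\Lambda} = 1$: for each fixed $T_\beta \in {\rm SYT}(\beta)$ there is a unique tableau of shape $\Lambda\setminus\beta^{\vee}$ rectifying to it. This uniqueness will force $R = W$.

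That ${\tt rectification}(R) = T_\beta$ is immediate from jdt-invariance of rectification, since $R$ is obtained from $T_\beta$ by reverse slides (which are jdt slides). For ${\tt rectification}(W) = T_\beta$, I would invoke the classical identity ${\tt rectification}(\sigma(T)) = {\tt evac}(T)$ valid for any straight-shape $T$, where $\sigma$ denotes the composition of entry reversal $i \mapsto |T|-i+1$ and $180$-degree rotation into the SE corner of $\Lambda$ used to define the tilde operation. Applied to $T = {\tt evac}(T_\beta)$ and using that ${\tt evac}$ is an involution,
\[
{\tt rectification}(W) = {\tt rectification}(\sigma({\tt evac}(T_\beta))) = {\tt evac}({\tt evac}(T_\beta)) = T_\beta.
\]

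The main obstacle is proving the classical identity ${\tt rectification}(\sigma(T)) = {\tt evac}(T)$. I plan to handle this by a growth-diagram argument: the $180$-degree rotation of the Fomin growth diagram that computes ${\tt rectification}(\sigma(T))$ is itself a valid growth diagram, since the rules (F1) and (F2) are invariant under rotation (taking an $\alpha$-$\beta$-$\gamma$-$\delta$ square to a $\delta$-$\gamma$-$\beta$-$\alpha$ square, which is the symmetry explicitly noted after the axioms). Reading the chain along the rotated diagram's boundary and matching it term-by-term with the iterated shape-chain description of ${\tt evac}(T)$ from the paper's definition identifies the two. As the paper remarks, this growth-diagram argument will extend straightforwardly to the cominuscule setting of \cite{Thomas.Yong,Thomas.Yong:DE}, yielding the promised cominuscule analogue of the Main Theorem.
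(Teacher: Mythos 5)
Your reduction itself (show that both ${\tt revrectification}(T_\beta)$ and $\widetilde{T}_\beta$ have shape ${\tt rotate}(\beta)$ and rectify to $T_\beta$, then invoke $C_{\beta^\vee,\beta}^{\Lambda}=1$) is a legitimate alternative to the paper's induction, but the step you yourself identify as the main obstacle --- ${\tt rectification}(\sigma(T))={\tt evac}(T)$ --- is not actually established, and the sketch you give for it fails. The symmetry ``explicitly noted after the axioms'' is the symmetry in $\alpha$ and $\delta$, i.e.\ invariance of (F1),(F2) under transposing the square; that is what gives the infusion involution, and it is \emph{not} invariance under $180$-degree rotation. Rotating a growth diagram by $180$ degrees and doing nothing else merely reproduces the same diagram read backwards (with fixed reading conventions its chains decrease, so (F1),(F2) do not even apply); the operation that genuinely preserves the local rules is rotation \emph{combined with complementation of every shape inside $\Lambda$}, $\lambda\mapsto\lambda^{\vee}$, which you never mention. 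Moreover, even with that correction, applying the symmetry to the rectangle computing ${\tt rectification}(\sigma(T))$ only shows that $T\mapsto{\tt rectification}(\sigma(T))$ is an involution: the rotated-and-complemented diagram has boundary data $\sigma$ of the original boundary data, and the chain $\emptyset-{\tt shape}(\Delta^{|T|-1}(T))-\cdots-T$ defining ${\tt evac}(T)$ appears nowhere in it. The rows of that rectangle are not the $\Delta^{k}(T)$ (the evacuation diagram is triangular, with rows of strictly decreasing length), so ``matching term-by-term with the iterated shape-chain description of ${\tt evac}(T)$'' is not available without substantial further argument. Since, given your uniqueness step, the identity ${\tt rectification}(\sigma(T))={\tt evac}(T)$ is equivalent to the equality ${\tt revrectification}(T_\beta)=\widetilde{T}_\beta$ you are trying to prove, deferring it to this vague matching leaves the proof essentially circular. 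The paper supplies exactly this missing content by induction on $|\beta|$: strip the entry $1$ from $T_\beta$, apply the inductive hypothesis to the rectification of the rest, and locate the entry $1$ of ${\tt revrectification}(T_\beta)$ using the shape fact of \cite[Proposition~4.6]{Thomas.Yong} together with the recursive description of ${\tt evac}$ via $\Delta$.

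Two smaller points. Your first equality (invariance of ${\tt revrectification}$ on jeu de taquin classes, via growth diagrams) is fine and is what the paper leaves implicit. But you assert, without proof or citation, that ${\tt revrectification}(T_\beta)$ has shape ${\tt rotate}(\beta)$; the paper cites \cite[Proposition~4.6]{Thomas.Yong} for precisely this, and your uniqueness count is per fixed shape, so you cannot omit it. It is patchable inside your own framework: reverse slides remain possible until the outer shape reaches $\Lambda$, so ${\tt revrectification}(T_\beta)$ has shape $\Lambda/\lambda'$ for some straight $\lambda'$, and since it rectifies to $T_\beta$, the vanishing part of duality ($C_{\lambda',\beta}^{\Lambda}=0$ unless $\lambda'=\beta^{\vee}$) forces $\lambda'=\beta^{\vee}$ --- but this needs to be said. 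Finally, if you were simply to cite the classical Sch\"utzenberger fact ${\tt rectification}(\sigma(T))={\tt evac}(T)$ rather than prove it, you would obtain a correct type-$A$ proof by a genuinely different route, but you would lose exactly the self-contained character that lets the paper's induction extend to the cominuscule setting; as it stands, your claimed cominuscule extension inherits the gap above.
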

\begin{proof}
It suffices to show that ${\tt revrectification}(T_{\beta})={\widetilde
T}_\beta$. We induct on $|\beta|=n$.  

Let $U$ be the tableau obtained by removing the box labeled $1$ from 
$T_\beta$.  So $U$ is of skew shape $\beta/(1)$. 
Let $V={\tt rectification}( U)$, a tableau of shape $\kappa\subset \beta$.
By induction, ${\tt revrectification}(V)=\widetilde{V}$.
Now ${\tt revrectification}(T_{\beta})$ and ${\tt revrectification}(V)$ agree 
except that the former has a label 1 in a box that does not appear
in the latter.
In \cite[Proposition 4.6]{Thomas.Yong}, 
we showed that the reverse rectification of a tableau of
shape $\beta$ is necessarily of shape ${\tt rotate}(\beta)$.  
Thus,
the location of 1 in ${\tt revrectification} (T_{\beta})$ must be the box
$\kappa^\vee/\beta^\vee$.  This is the $180$-degree
rotation of the box $\beta/\kappa$ in $\Lambda$, the latter being
the position of $n$ in
${\tt evac}(T_{\beta})$.  Thus, $1$ is located in the desired position in
${\tt revrectification}(T_{\beta})$.
The rest of the statement follows from the
fact that
the rest of ${\tt revrectification}(T_{\beta})$ agrees with ${\widetilde V}$, 
and the entries
of ${\tt evac}(T_{\beta})$ other than $n$ agree with ${\tt evac}(V)$.
\end{proof}

\begin{Corollary}
\label{cor:moreinit}
Fix a carton filling. The uninitialized corners of the 
``$\emptyset,T_{\lambda},T_{\mu}$'' and
``$\emptyset,T_{\nu},T_{\lambda}$'' faces are $\nu^{\vee}$ 
and $\mu^{\vee}$ respectively. The remaining ``sixth'' corner (the unique 
one not visible in
Figure~\ref{fig:growth})
is
$\lambda^{\vee}$. Thus, we can speak of the
edges $\lambda^{\vee}-\Lambda$, $\mu^{\vee}-\Lambda$ and $\nu^{\vee}-\Lambda$.
These 
represent the shape chains of ${\widetilde T}_{\lambda}, {\widetilde T}_{\mu}$
and ${\widetilde T}_{\nu}$ respectively.
\end{Corollary}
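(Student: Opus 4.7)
The plan is to combine the Lemma with Fomin's growth-diagram theory applied on the three faces of the carton that meet $\Lambda$ rather than $\emptyset$.  The pivotal observation is elementary: a skew standard tableau whose outer shape is already $\Lambda$ admits no reverse jeu de taquin slide, hence equals its own reverse rectification.

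Fix a carton filling and write $A$, $B$, $C$ for the uninitialized corners of the ``$\emptyset, T_\lambda, T_\mu$'', ``$\emptyset, T_\mu, T_\nu$'', and ``$\emptyset, T_\nu, T_\lambda$'' faces, respectively.  Let $W'$ denote the shape chain along the edge from $A$ to $\Lambda$, read as a skew standard tableau of outer shape $\Lambda$ and inner shape $A$.  This edge is shared by two $\Lambda$-facing side faces; work on the one with corners $\mu, A, B, \Lambda$.  Orient it so that $\mu$ sits at SW and $\Lambda$ at NE, with $W'$ along the top row and the chain from $\mu$ to $A$ (which is the right column of the ``$\emptyset, T_\lambda, T_\mu$'' face) along the left column.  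Fomin's growth-diagram identity then identifies the bottom row of this side face with the partial rectification of $W'$ using this left-column chain as its slide order.  But this bottom row coincides with the right column of the ``$\emptyset, T_\mu, T_\nu$'' face, and the infusion involution applied on that face shows that right column rectifies to $T_\nu$.  Transitivity of rectification (the first fundamental theorem of jeu de taquin) now yields ${\tt rectification}(W') = T_\nu$.

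Applying the Lemma with its ``$\beta$'' equal to $\nu$ gives ${\tt revrectification}(W') = \widetilde{T}_\nu \in {\rm SYT}(\Lambda \setminus \nu^\vee)$, whereas the opening observation gives ${\tt revrectification}(W') = W'$.  Hence $W' = \widetilde{T}_\nu$; comparing inner shapes forces $A = \nu^\vee$ and identifies the chain on the edge from $A$ to $\Lambda$ with $\widetilde{T}_\nu$.  Cycling the argument through the other two $\Lambda$-facing side faces yields $B = \lambda^\vee$ and $C = \mu^\vee$, together with the analogous identifications of the edges from $\lambda^\vee$ and $\mu^\vee$ to $\Lambda$ as shape chains of $\widetilde{T}_\lambda$ and $\widetilde{T}_\mu$ respectively; this pins down the hidden ``sixth'' corner of Figure~\ref{fig:growth} as well.

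The main obstacle is the composed-rectification step: one must unwind the infusion identities of Section~2.1 on two adjacent faces in turn and verify that the two partial rectifications compose, on the nose, to the full rectification of $W'$ reaching the straight-shape tableau $T_\nu$.  The corresponding compositions for the $\mu^\vee$ and $\lambda^\vee$ cases are entirely parallel, so a single careful computation handles all three.
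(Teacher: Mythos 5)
Your proposal is correct and is essentially the paper's own argument: both glue a face through $\emptyset$ to the adjacent face through $\Lambda$ into a single growth diagram and then invoke the Lemma together with the well-definedness of (reverse) rectification. The only difference is the direction of reading --- the paper reads the combined diagram as computing ${\tt revrectification}$ of the initialized straight-shape tableau directly, whereas you read it forward to show the edge-to-$\Lambda$ tableau rectifies to that straight tableau and then convert via the Lemma plus the observation that a tableau of outer shape $\Lambda$ is its own reverse rectification.
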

\begin{proof}
Think of the union of the faces 
``$\emptyset,T_{\mu},T_{\lambda}$'' 
with the adjacent face involving $\Lambda$ and the ``sixth
corner''
as a single growth diagram. This diagram computes 
${\tt revrectification}(T_{\lambda})$ and records the result
along the edge involving $\Lambda$ and 
diagonally opposite to the $T_{\lambda}$ edge.
But the lemma asserts this result is ${\widetilde T}_{\lambda}$ and hence
the ``sixth corner vertex'' is assigned $\lambda^{\vee}$.
The other conclusions are proved
similarly.
\end{proof}

Corollary~\ref{cor:moreinit} affords us the 
convenience of referring to a
face by its corner vertices. 
Note any carton filling
gives a growth diagram on the face 
$\emptyset-\mu-\nu^{\vee}-\lambda$ for which the edge $\lambda-\nu^{\vee}$
is a standard tableau of shape $\nu^{\vee}/\lambda$ rectifying
to $T_{\mu}$. By the jeu de taquin Littlewood-Richardson
rule, fillings of this face count $C_{\lambda,\mu,\nu}$. Hence it suffices
to show that any such growth diagram for this face 
extends uniquely to a filling of the entire carton.

If such an extension exists, it is unique: $\lambda-\nu^{\vee}$
and $\Lambda-\nu^{\vee}$ determine, by (F1), (F2) and the Corollary,
the face $\lambda-\mu^{\vee}-\Lambda-\nu^{\vee}$. Similarly, these
two faces determine the remaining faces (in order):
\[\emptyset-\nu-\mu^{\vee}-\lambda\implies 
\nu-\lambda^{\vee}-\Lambda-\mu^{\vee}\implies
\mu-\lambda^{\vee}-\Lambda-\nu^{\vee}\implies
\emptyset-\nu-\lambda^{\vee}-\mu.\]

We now show that 
with a filling of $\emptyset-\mu-\nu^{\vee}-\lambda$ (together
with the extra edges given in the Corollary) one can
extend it to fill the carton, using the conclusions
of Corollary~\ref{cor:moreinit}.

\medskip
\noindent
\underline{$\emptyset-\mu-\nu^{\vee}-\lambda$:}
This is given by labeling $\lambda-\nu^{\vee}$ with 
$T_{\nu^{\vee}/\lambda}\in {\rm SYT}(\nu^{\vee}/\lambda)$ that {\bf witnesses}
$C_{\lambda,\mu,\nu}=C_{\lambda,\mu}^{\nu^{\vee}}$, 
i.e., it rectifies to $T_{\mu}$. By the 
infusion involution, the edge $\mu-\nu^{\vee}$ representing  
$T_{\nu^{\vee}/\mu}:={\tt infusion}_2(T_{\lambda},T_{\nu^{\vee}/\lambda})
\in {\rm SYT}(\nu^{\vee}/\mu)$ witnesses $C_{\mu,\lambda}^{\nu^{\vee}}$.

\medskip
\noindent
\underline{$\lambda-\mu^{\vee}-\Lambda-\nu^{\vee}$:} Build
the growth diagram using $T_{\nu^{\vee}/\lambda}$ and
${\widetilde T}_{\nu}$ from the edges $\lambda-\nu^{\vee}$ and
$\nu^{\vee}-\Lambda$ respectively. 
The only boundary condition we need to check
is that \linebreak ${\tt infusion}_2(T_{\nu^{\vee}/\lambda},{\widetilde T}_{\nu})
={\widetilde T}_{\mu}$, which is true by the Lemma. The newly determined
edge 
$\lambda-\mu^{\vee}$ is 
${\tt infusion}_{1}(T_{\nu^{\vee}/\lambda},{\widetilde T}_{\nu})$. This is a tableau $T_{\mu^{\vee}/\lambda}\in {\rm SYT}(\mu^{\vee}/\lambda)$ which rectifies to $T_{\nu}$, i.e., one that witnesses $C_{\lambda,\nu}^{\mu^{\vee}}$. 

\medskip
\noindent
\underline{$\emptyset-\nu-\mu^{\vee}-\lambda$:} Using the determined edges 
$\lambda-\mu^{\vee}$ and $\emptyset-\lambda$ we obtain a growth diagram
with edge $\emptyset-\nu$ representing 
${\tt infusion}_1(T_{\lambda},T_{\mu^{\vee}/\lambda})$, which equals
$T_{\nu}$, as desired. By the infusion involution, $\nu-\mu^{\vee}$
represents a tableau $T_{\mu^{\vee}/\nu}\in {\rm SYT}({\mu^{\vee}/\nu})$
that witnesses $C_{\nu,\lambda}^{\mu^{\vee}}$. 

\medskip
\noindent
\underline{$\nu-\lambda^{\vee}-\Lambda-\mu^{\vee}$:} This time, we 
grow the face using the edges $\nu-\mu^{\vee}$ and $\mu^{\vee}-\Lambda$.
The edge $\lambda^{\vee}-\Lambda$ is thus 
${\tt infusion}_2(T_{\mu^{\vee}/\nu},{\widetilde T}_{\mu})$ which
indeed equals ${\widetilde T}_{\lambda}$, by the Lemma. 
The newly determined edge $\nu-\lambda^{\vee}$ is 
$T_{\lambda^{\vee}/\nu}:={\tt infusion}_1(T_{\mu^{\vee}/\nu},
{\widetilde T}_{\mu})\in {\rm SYT}(\lambda^{\vee}/\nu)$ that witnesses 
$C_{\nu,\mu}^{\lambda^{\vee}}$. 

\medskip
\noindent
\underline{$\mu-\lambda^{\vee}-\Lambda-\nu^{\vee}$:} Growing the face using
$\mu-\nu^{\vee}$ and $\nu^{\vee}-\Lambda$ we find that the edge 
$\lambda^{\vee}-\Lambda$ equals ${\tt infusion}_2(T_{\nu^{\vee}/\mu},{\widetilde T}_{\nu})$, which is the already determined ${\widetilde T}_{\lambda}$.
The newly determined edge $\mu-\lambda^{\vee}$ is 
$T_{\lambda^{\vee}/\mu}:={\tt infusion}_1(T_{\nu^{\vee}/\mu},{\widetilde T}_{\nu})\in {\rm SYT}(\lambda^{\vee}/\mu)$ that witnesses 
$C_{\mu,\nu}^{\lambda^{\vee}}$. 

\medskip
\noindent
\underline{$\emptyset-\nu-\lambda^{\vee}-\mu$:} We grow this final face
using $\emptyset-\nu$ and $\nu-\lambda^{\vee}$, but need to make two
consistency checks. First the edge $\emptyset-\mu$ is 
${\tt infusion}_1(T_{\nu},T_{\lambda^{\vee}/\nu})$ which clearly
is $T_{\mu}$. 

It remains to check that 
\begin{equation}
\label{eqn:finalcheck}
{\tt infusion}_2(T_{\nu},T_{\lambda^{\vee}/\nu})=T_{\lambda^{\vee}/\mu}.
\end{equation}
(Notice that ${\tt infusion}_2(T_{\nu},T_{\lambda^{\vee}/\nu})$
is a filling of $\lambda^{\vee}/\mu$ that rectifies to $T_{\nu}$,
just as $T_{\lambda^{\vee}/\mu}$ does. However it is not clear \emph{a priori}
that they are the same.)

To prove (\ref{eqn:finalcheck}), we need a definition.
For tableaux $A$ and $B$ of respective 
(skew) shapes $\alpha$ and $\gamma/\alpha$ 
let $A\star B$ be their concatenation as a 
(nonstandard) tableau. If $C$ is a tableau of shape 
$\Lambda/\gamma$ and $\alpha$ is a straight shape, $A\star B\star C$ is a
{\bf layered tableau} of shape $\Lambda$.

\begin{Example}
\label{exa:mainone}
Let $\alpha=(2,1)$ and $\gamma=(4,2,1)$. We have
\[A=\tableau{{1}&{2}\\{3}}, \ \ B=\tableau{{\ }&{\ }&{{\bf 2} }&
{\bf 3}\\{\ }&{{\bf 1} }\\{\bf 4}}, \ \  
C=\tableau{{\ }&{\ }&{ \ }&{\ }\\{\ }&{\ }&{\underline 2 }&{\underline 3}\\{\ }&{\underline 1}&{\underline 4}&{\underline 5}}, \mbox{ and }
A\star B\star C=\tableau{{1 }&{2 }&{{\bf 2} }&{{\bf 3} }\\{3}&{{\bf 1} }&{{\underline 2} }&{\underline 3}\\{{\bf 4} }&{\underline 1}&{\underline 4}&{\underline 5}}.
\]
We have used boldface and underlining to distinguish the entries from 
$A,B$ and $C$.
\end{Example}

Let ${\mathcal I}_1$ and ${\mathcal I}_2$ be operators on layered tableaux
defined by
\[{\mathcal I}_1:A\star B\star C\mapsto  
{\tt infusion}_1(A,B)\star {\tt infusion}_2(A,B)\star C\]
and
\[{\mathcal I}_2:A\star B\star C\mapsto  
A\star{\tt infusion}_1(B,C)\star {\tt infusion}_2(B,C).\]
The infusion involution says ${\mathcal I}_1^2$ and ${\mathcal I}_2^2$
are the identity operator. In fact, the following crucial
``braid identity'' holds,
showing ${\mathcal I}_1$ and ${\mathcal I}_2$ generate a 
representation of $S_3$:
\begin{Proposition}
\label{prop:braid}
${\mathcal I}_1 \circ {\mathcal I}_2 \circ {\mathcal I}_1 = 
{\mathcal I}_2 \circ {\mathcal I}_1\circ {\mathcal I}_2$.
\end{Proposition}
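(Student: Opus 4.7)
The strategy is to realize both sides of the braid identity as two different readings of the corners of a single three-dimensional growth structure.

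Given $A\star B\star C$ of shape $\Lambda$ with $A\in {\rm SYT}(\alpha)$, $B\in {\rm SYT}(\gamma/\alpha)$, $C\in {\rm SYT}(\Lambda/\gamma)$, I would construct a three-dimensional box of dimensions $a\times b\times c$ (where $a=|A|$, $b=|B|$, $c=|C|$) and label its lattice points with Young diagrams so that every $2$-dimensional unit face satisfies the Fomin conditions (F1), (F2). The chains of $A$, $B$, and $C$ are placed on three consecutive edges forming a staircase path from the corner labeled $\emptyset$ to the corner labeled $\Lambda$. The three $2$-faces adjacent to this staircase then become Fomin growth diagrams, each computing an ${\tt infusion}$ of an adjacent pair of the data and providing input edges for the three opposite $2$-faces.

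The key structural claim is that this $3$D extension is uniquely determined by the initial staircase data. Once the three near faces are filled by Fomin's rules applied to the staircase edges, each opposite face shares two internal edges with a near face, reducing its filling to another $2$D growth problem. Consistency along the shared $1$-edges is ensured by the symmetry of (F1), (F2) under $\alpha\leftrightarrow\delta$, so all lattice points of the box receive unambiguous Young-diagram labels.

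With the $3$D structure in hand, ${\mathcal I}_1\circ {\mathcal I}_2\circ {\mathcal I}_1$ applied to $A\star B\star C$ is read off as the three chains on the three edges incident to the terminal corner $(a,b,c)$, traversed in one rotational order, while ${\mathcal I}_2\circ {\mathcal I}_1\circ {\mathcal I}_2$ yields the same three edges in the opposite rotational order. Since both decodings extract tableaux from the same uniquely determined box, the braid identity follows. The main obstacle is the rigorous establishment of the $3$D consistency---that the growth on the far three faces is coherent with the growth on the near three faces. A direct approach uses induction on $|A|+|B|+|C|$, peeling off one box of the staircase at a time and invoking the $2$D Fomin consistency; an alternative identifies each $2$-face with a specific two-argument ${\tt infusion}$ and closes the argument using the infusion involution, in the same spirit as the face-by-face extension already employed in the main proof.
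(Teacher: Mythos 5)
There is a genuine gap. Your entire argument rests on the ``key structural claim'' that the $a\times b\times c$ box has a unique consistent filling, and the justification you offer --- that ``consistency along the shared $1$-edges is ensured by the symmetry of (F1), (F2) under $\alpha\leftrightarrow\delta$'' --- is not correct. That symmetry yields only the transpose symmetry of a \emph{single} rectangular growth diagram, i.e.\ the infusion involution ${\mathcal I}_1^2={\mathcal I}_2^2=\mathrm{id}$. The consistency you need is of a different nature: the three far faces of your box are three \emph{separate} growth diagrams built from different input edges, and the assertion that they agree along their pairwise shared edges is exactly equivalent to the braid identity. Indeed, tracing ${\mathcal I}_1\circ{\mathcal I}_2\circ{\mathcal I}_1$ around the box fills, in order, the face carrying ${\tt infusion}(A,B)$, then the face carrying ${\tt infusion}({\tt infusion}_2(A,B),C)$, then one far face; tracing ${\mathcal I}_2\circ{\mathcal I}_1\circ{\mathcal I}_2$ fills the other three faces; the two outputs are the chains on the same three edges at the far corner computed from different faces, and their agreement is precisely the statement to be proved. (If the box really were consistent, the two readings would coincide edge by edge in the same order, not merely up to a ``rotational order'' --- a sign the bookkeeping has not been carried out.) So the proposal assumes the proposition in disguise.

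Nor can the gap be closed by the means you indicate. Operators subject only to ${\mathcal I}_1^2={\mathcal I}_2^2=\mathrm{id}$ generate an infinite dihedral group, so no argument that ``closes using the infusion involution'' can ever force the braid relation; and an induction peeling off one box of the staircase has no evident inductive structure, since deleting a box of $A$ alters all of the faces already filled. Some genuinely new input is required, and this is what the paper's proof supplies: using the Lemma on reverse rectification and evacuation, both sides are first put in the form $\widetilde{C}\star(\cdot)\star\widetilde{A}$ with known outer factors, and then the two candidate middle tableaux ($B''$ and $B^{\circ\circ}$, extracted from an evacuation and a reverse-evacuation triangular growth diagram) are shown, via Haiman's dual equivalence facts, to be simultaneously dual equivalent and jeu de taquin equivalent, hence equal. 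If you wish to salvage the three-dimensional picture, you would need an independent coherence theorem of comparable strength (for instance via dual equivalence, tableau switching, or the octahedron recurrence); with such a theorem in hand the box is an attractive visualization, but by itself it is not a proof.
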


In view of the Proposition, (\ref{eqn:finalcheck}) 
follows by setting
$\alpha=\lambda$, $\gamma=\nu^{\vee}$,
$A=T_{\lambda}, B=T_{\lambda^{\vee}/\nu}$ and $C={\widetilde T}_{\nu}$,
since the assertion merely says the ``middle'' tableau in
\begin{equation}\label{eqn1}{\mathcal I}_1\circ {\mathcal I}_2\circ {\mathcal I}_1\circ {\mathcal I}_2(T_{\lambda}\star T_{\nu^{\vee}/\lambda}\star {\widetilde T}_{\nu})
 \mbox{ and }{\mathcal I}_2\circ {\mathcal I}_1(T_{\lambda}\star T_{\nu^{\vee}/\lambda}\star {\widetilde T}_{\nu})
\end{equation}
are the same, whereas we even have equality of the two layered tableaux.

\noindent
\emph{Proof of Proposition~\ref{prop:braid}:} 
By the Lemma it follows that
\begin{equation}
\label{eqn:firsttoshow}
{\widetilde C}\star{\overline B}\star {\widetilde A}:={\mathcal I}_1 \circ {\mathcal I}_2 \circ {\mathcal I}_1(A\star B\star C), \mbox{ and }
\end{equation}
\begin{equation}
\label{eqn:secondtoshow}
{\widetilde C}\star{\hat B}\star {\widetilde A}:={\mathcal I}_2 \circ {\mathcal I}_1 \circ {\mathcal I}_2(A\star B\star C),
\end{equation}
where the shapes of ${\widetilde C},
{\overline B}$ and ${\widetilde A}$ are respectively
the 180 degree rotations of the shapes of $A$, $B$ and $C$, and we know
the fillings of ${\widetilde C}$ and ${\widetilde A}$
in terms of evacuation. We thus also know the shapes
of ${\overline B}$ and ${\hat B}$ are the same. 
It remains to show $\overline B$ equals $\hat B$.

We first study (\ref{eqn:firsttoshow}). Let $B'=\infusion_1(A,B)$, and 
$A'=\infusion_2(A,B)$.    
Now
\begin{equation}
\label{eqn:121_case}
\mathcal I_1 \circ \mathcal I_2(B'\star A'\star C)=
\widetilde C\star \overline B \star \widetilde A,
\end{equation}
and the  skew tableau ${\overline B}\star{\widetilde A}$ 
is obtained by treating 
$B'\star A'$ as a \emph{single standard tableau} by valuing an entry $i$
of $A'$ as $|B'|+i$, evacuating,
rotating the result 180 degrees and finally sending the entry $i$ 
from $A'$ (respectively $B'$) to $|A'|-i+1$ (respectively $|B'|-i+1$).

\begin{Example}
\label{exa:121exa}
Continuing the previous example, 
\[B'\star A'=\tableau{{\bf 1}&{\bf 2}&{\bf 3}&{2}\\{\bf 4}&{3}\\{1}}\mbox{ and } {\tt evac}(B'\star A')=A''\star B''=\tableau{{1}&{3}&{\bf 3}&{\bf 4}\\{2}&{\bf 1}\\{\bf 2}}
\mapsto{\overline B}\star{\widetilde A}=\tableau{{\ }&{\ }&{ \ }&{\bf 3 }\\
{\ }&{\ }&{ \bf 4 }&{2 }\\ {\bf 1 }&{\bf 2 }&{1 }&{3 }}
\]
\end{Example}

Let us focus on $\evac(B'\star A')=A''\star B''$ and momentarily ignore
the subsequent rotation and complementation of entries. Fomin
has shown in \cite[Appendix~2]{Stanley} that a fruitful way to think of
${\tt evac}(X)$ is with triangular growth diagrams obtained by placing
the shape chain of $X$, then $\Delta(X),\Delta^2(X),\ldots$ on top of one 
another (slanted left to right); see Figure~\ref{fig:evacgrowth}.

\begin{figure}[h]
\unitlength=.28mm
\begin{picture}(320,160)
\thinlines
\put(20,10){\line(1,1){140}}
\put(160,150){\line(1,-1){140}}
\put(60,10){\line(1,1){120}}
\put(100,10){\line(1,1){100}}
\put(140,10){\line(1,1){80}}
\thicklines
\put(180,10){\line(1,1){60}}
\thinlines
\put(220,10){\line(1,1){40}}
\put(260,10){\line(1,1){20}}
\put(260,10){\line(-1,1){120}}
\put(220,10){\line(-1,1){100}}
\thicklines
\put(180,10){\line(-1,1){80}}
\thinlines
\put(140,10){\line(-1,1){60}}
\put(100,10){\line(-1,1){40}}
\put(60,10){\line(-1,1){20}}
\put(15,0){$\emptyset$}
\put(55,0){$\emptyset$}
\put(95,0){$\emptyset$}
\put(135,0){$\emptyset$}
\put(175,0){$\emptyset$}
\put(215,0){$\emptyset$}
\put(255,0){$\emptyset$}
\put(295,0){$\emptyset$}
\put(90,90){$\mu$}
\put(157,155){$\gamma$}
\put(242,70){$\alpha$}
\put(45,55){$B'$}
\put(140,55){${\tt evac}(B')$}
\put(275,45){$A''$}
\put(200,45){$A$}
\put(115,125){$A'$}
\put(208,110){$B''$}
\end{picture}
\caption{\label{fig:evacgrowth} A triangular growth diagram to compute
${\tt evac}(B'\star A')$}
\end{figure}
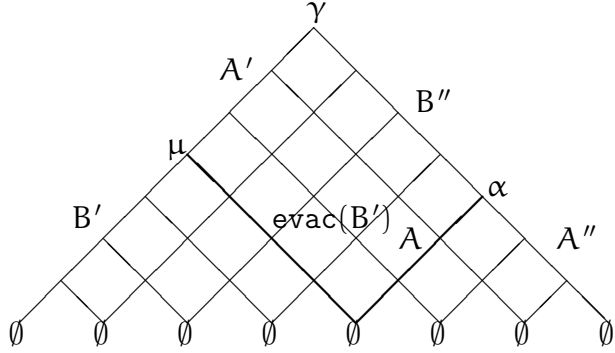
We begin with the data $B'\star A'$ along the left side of the
diagram. This is given as a concatenation of two shape chains, one from 
$\emptyset-\mu$ and then one from $\mu-\gamma$ where $\mu$ is the
shape of $B'$. Applying (F1) and (F2) from Section~1, we obtain
$A''\star B''={\tt evac}(B'\star A')$, given along the righthand side as
a pair of chains connected at $\alpha$. It follows from the construction of
growth diagrams that the thick lines represent
${\tt evac}(B')$ and $A=\evac(A'')$ (the latter being a consequence
of the Lemma).

These thick lines together with the vertex $\gamma$ define a
rectangular growth diagram, so:
\begin{equation}
\label{eqn:1conclusion}
{\tt infusion}(A,B'')=({\tt evac}(B'),A').
\end{equation}
In particular, the shape of ${\tt rectification}(B'')=\mu$.
On the other hand, by definition
\begin{equation}
\label{eqn:2conclusion} 
{\tt infusion}(A,B)=(B',A'),
\end{equation}
and $B$ rectifies to a tableau of shape $\mu$ also.

By (\ref{eqn:1conclusion}) and (\ref{eqn:2conclusion}) combined 
with the aforementioned results of 
\cite{Haiman:DE} we see $B$ and $B''$ are dual equivalent, as they 
both are obtained by an application of the same sequence of 
slides (encoded by $A'$) to a pair of tableaux of the same straight shape.
Moreover, by (\ref{eqn:1conclusion}) $B''$ is jeu de taquin
equivalent to ${\tt evac}(B')={\tt evac}({\tt rectification}(B))$. Carrying out  
the analogous ``reverse'' 
analysis on ${\mathcal I}_2 \circ {\mathcal I}_1 \circ {\mathcal I}_2(A\star B\star C)$ 
we let $C^{\circ}={\tt infusion}_1(B,C)$ and
$B^{\circ}={\tt infusion}_2(B,C)$ and study
${\mathcal I}_2\circ {\mathcal I}_1(A\star C^{\circ}\star B^{\circ})
={\widetilde C}\star {\hat B}\star {\widetilde A}$.
Parallel to (\ref{eqn:121_case}) we
consider the ``reverse evacuation'' $B^{\circ\circ}\star C^{\circ\circ}$ 
of $C^{\circ}\star B^{\circ}$ using \emph{reverse} jeu de
taquin slides. Then we similarly conclude $B^{\circ\circ}$ is dual equivalent to $B$
(and thus $B''$). Also $B^{\circ\circ}$ is 
jeu de taquin equivalent to the reverse evacuation of ${\tt revrectification}(B)$. 
Thus by the lemma, $B''$ and $B^{\circ\circ}$ are jeu de taquin equivalent. Hence
$B''=B^{\circ\circ}$ and so ${\overline B}={\hat B}$, as required.

\begin{Example}
Revisiting Example~\ref{exa:mainone},
$C^{\circ}\star B^{\circ}=\tableau{&&{\underline 2}&{\underline 3}\\&{\underline 4}&{\underline 5}&{\bf 3}\\{\underline 1}&{\bf 1}&{\bf 2}&{\bf 4}}$ with reverse evacuation 
$\tableau{&&{\bf 3}&{\bf 4}\\&{\bf 1}&{\underline 1}&{\underline 2}\\{\bf 2}&{\underline 3}&{\underline 4}&{\underline 5}}$, and $B^{\circ\circ}$ (boldface in the latter tableau) 
is also $B''$ from Example~\ref{exa:121exa}. Rotation and complementation gives ${\hat B}$, which is ${\overline B}$ from Example~\ref{exa:121exa}, as desired. \qed
\end{Example}

\section{An extended example of the main theorem}
Let 
$\Lambda=3\times 4=\tableau{{\ }&{ \ }&{\ }&{ \ }\\ {\ }&{ \ }&{\ }&{ \ }\\ {\ }&{ \ }&{\ }&{ \ }},\lambda=(2,1)=\tableau{{\ }&{ \ }\\{\ }}, \mu=(3,1)=\tableau{{\ }&{\ }&{\ }\\{\ }}, \nu=(3,2)=\tableau{{\ }&{\ }&{\ }\\ {\ }&{\ }}.$
Therefore $\lambda^{\vee}=(4,3,2), \mu^{\vee}=(4,3,1)$ and $\nu^{\vee}=(4,2,1)$. Also
\[T_{\lambda}=\tableau{{1 }&{ 2 }\\{3 }}, T_{\mu}=\tableau{{1 }&{2 }&{3 }\\{4 }}, T_{\nu}=\tableau{{1 }&{2 }&{3 }\\ {4 }&{5 }}, \ 
{\widetilde T}_{\lambda}=\tableau{{\ }&{ \ }&{\ }&{ \ }\\ {\ }&{ \ }&{\ }&{ 1 }\\ {\ }&{ \ }&{2 }&{ 3 }},\ 
{\widetilde T}_{\mu}=\tableau{{\ }&{ \ }&{\ }&{ \ }\\ {\ }&{ \ }&{\ }&{ 3 }\\ {\ }&{ 1 }&{2 }&{ 4 }}, \mbox{ and }
{\widetilde T}_{\nu}=\tableau{{\ }&{ \ }&{\ }&{ \ }\\ {\ }&{ \ }&{2 }&{ 3 }\\ {\ }&{ 1 }&{4 }&{ 5 }}.
\]

Here $C_{\lambda,\mu,\nu}=1$, and we now give the unique carton filling.
We begin with 
$T_{\nu^{\vee}/\lambda}=\tableau{{\ }&{\ }&{2}&{3}\\{\ }&{4}\\{1}}$.
Then we have the following sides of the carton, described as growth diagrams
with the obvious identifications of boundaries
(the partitions correspond to shapes placed on the vertices
of the carton and the diagrams have been oriented to be consistent with Figure~\ref{fig:growth}):

\begin{table}[h]
\begin{center}
\begin{tabular}{|l|l|l|l|l|}
\hline
$(2,1)=\lambda$ & $(2,1,1)$ & $(3,1,1)$ & $(4,1,1)$ & $(4,2,1)=\nu^{\vee}$\\\hline 
$(2)$ & $(2,1)$ & $(3,1)$ & $(4,1)$ & $(4,2)$\\ \hline
$(1)$ & $(1,1)$ & $(2,1)$ & $(3,1)$ & $(3,2)$\\ \hline
$\emptyset$ & $(1)$ & $(2)$ & $(3)$ & $(3,1)=\mu$ \\ \hline
\end{tabular}
\end{center}
\caption{$\emptyset-\mu-\nu^{\vee}-\lambda$}
\end{table}

\begin{table}[h]
\begin{center}
\begin{tabular}{|l|l|l|l|l|}
\hline
$(4,3,1)=\mu^{\vee}$ & $(4,3,2)$ & $(4,3,3)$ & $(4,4,3)$ & $(4,4,4)=\Lambda$\\\hline 
$(4,2,1)$ & $(4,2,2)$ & $(4,3,2)$ & $(4,4,2)$ & $(4,4,3)$\\ \hline
$(4,2)$ & $(4,2,1)$ & $(4,3,1)$ & $(4,4,1)$ & $(4,4,2)$\\ \hline
$(3,2)$ & $(3,2,1)$ & $(3,3,1)$ & $(4,3,1)$ & $(4,3,2)$ \\ \hline
$(2,2)$ & $(2,2,1)$ & $(3,2,1)$ & $(4,2,1)$ & $(4,2,2)$ \\ \hline
$(2,1)=\lambda$ & $(2,1,1)$ & $(3,1,1)$ & $(4,1,1)$ & $(4,2,1)=\nu^{\vee}$ \\ \hline
\end{tabular}
\end{center}
\caption{$\lambda-\mu^{\vee}-\Lambda-\nu^{\vee}$}
\end{table}

\begin{table}[h]
\begin{center}
\begin{tabular}{|l|l|l|l|l|l|}
\hline
$(4,3,1)=\mu^{\vee}$ & $(4,2,1)$ & $(4,2)$ & $(3,2)$ & $(2,2)$ & $(2,1)=\lambda$\\ \hline
$(4,2,1)$ & $(4,1,1)$ & $(4,1)$ & $(3,1)$ & $(2,1)$ & $(2)$\\ \hline
$(3,2,1)$ & $(3,1,1)$ & $(3,1)$ & $(2,1)$ & $(1,1)$ & $(1)$\\ \hline
$(3,2)=\nu$ & $(3,1)$ & $(3)$ & $(2)$ & $(1)$ & $\emptyset$\\ \hline
\end{tabular}
\end{center}
\caption{$\emptyset-\nu-\mu^{\vee}-\lambda$}
\end{table}

\begin{table}[h]
\begin{center}
\begin{tabular}{|l|l|l|l|l|}
\hline
$(4,3,1)=\mu^{\vee}$ & $(4,3,2)$ & $(4,3,3)$ & $(4,4,3)$ & $(4,4,4)=\Lambda$\\\hline 
$(4,2,1)$ & $(4,2,2)$ & $(4,3,2)$ & $(4,4,2)$ & $(4,4,3)$\\ \hline
$(3,2,1)$ & $(3,2,2)$ & $(3,3,2)$ & $(4,3,2)$ & $(4,4,2)$\\ \hline
$(3,2)=\nu$ & $(3,2,1)$ & $(3,3,1)$ & $(4,3,1)$ & $(4,3,2)=\lambda^{\vee}$ \\ \hline
\end{tabular}
\end{center}
\caption{$\nu-\lambda^{\vee}-\Lambda-\mu^{\vee}$}
\end{table}

\begin{table}[h]
\begin{center}
\begin{tabular}{|l|l|l|l|l|l|}
\hline
$(4,4,4)=\Lambda$ & $(4,4,3)$ & $(4,4,2)$ & $(4,3,2)$ & $(4,2,2)$ & $(4,2,1)=\nu^{\vee}$\\ \hline
$(4,4,3)$ & $(4,4,2)$ & $(4,4,1)$ & $(4,3,1)$ & $(4,2,1)$ & $(4,2)$\\ \hline
$(4,3,3)$ & $(4,3,2)$ & $(4,3,1)$ & $(3,3,1)$ & $(3,2,1)$ & $(3,2)$\\ \hline
$(4,3,2)=\lambda^{\vee}$ & $(4,2,2)$ & $(4,2,1)$ & $(3,2,1)$ & $(3,1,1)$ & $(3,1)=\mu$\\ \hline
\end{tabular}
\end{center}
\caption{$\lambda^{\vee}-\Lambda-\nu^{\vee}-\mu$}
\end{table}

\begin{table}[h]
\begin{center}
\begin{tabular}{|l|l|l|l|l|}
\hline
$(3,2)=\nu$ & $(3,2,1)$ & $(3,3,1)$ & $(4,3,1)$ & $(4,3,2)=\lambda^{\vee}$\\\hline 
$(3,1)$ & $(3,1,1)$ & $(3,2,1)$ & $(4,2,1)$ & $(4,2,2)$\\ \hline
$(3)$ & $(3,1)$ & $(3,2)$ & $(4,2)$ & $(4,2,1)$\\ \hline
$(2)$ & $(2,1)$ & $(2,2)$ & $(3,2)$ & $(3,2,1)$\\ \hline
$(1)$ & $(1,1)$ & $(2,1)$ & $(3,1)$ & $(3,1,1)$\\ \hline
$\emptyset$ & $(1)$ & $(2)$ & $(3)$ & $(3,1)=\mu$ \\ \hline
\end{tabular}
\end{center}
\caption{$\emptyset-\nu-\lambda^{\vee}-\mu$}
\end{table}

\section*{Acknowledgments}
HT was partially supported by an NSERC Discovery Grant and
AY by NSF grant 0601010. 
This work was partially completed while HT was a visitor at the Centre de
Recherches Math\'ematiques and the University of Minnesota, 
and while AY was at the Fields Institute and the University of
Michigan, Ann Arbor. We thank the Banff International
Research Station for a common visit during the ``Schubert calculus and Schubert geometry'' workshop organized by Jim Carrell and Frank Sottile.
We also thank Sergey Fomin, Allen Knutson, 
Ezra Miller, Vic Reiner, Luis Serrano, David Speyer and Dennis Stanton 
for helpful discussions.

\end{document}